\title[Caterpillar dualities]{Caterpillar dualities and regular languages}
\author[P. L. Erd\H{o}s]{P\'{e}ter L. Erd\H{o}s}
\author[C. Tardif]{Claude Tardif}
\author[G. Tardos]{G\'abor Tardos}
\address{Alfr\'ed R\'enyi Institute of Mathematics\\
 Hungarian Academy of Sciences\\
Budapest, P.O. Box 127, H-1364 Hungary}
\email{erdos.peter@renyi.mta.hu}
\address{Royal Military College of Canada \\
PO Box 17000 Station ``Forces'' \\
Kingston, Ontario, Canada, K7K 7B4 }
\email{Claude.Tardif@rmc.ca}
\address{Alfr\'ed R\'enyi Institute of Mathematics\\
 Hungarian Academy of Sciences\\
Budapest, P.O. Box 127, H-1364 Hungary}
\email{tardos.gabor@renyi.mta.hu}
\thanks{The first author's work was supported in part by the Hungarian NSF,
under contract NK 78439 and K 68262. The second author's work was supported
by grants from NSERC and ARP. The third author's work was supported in part by the NSERC grant 329527 by the Hungarian OTKA grants T-046234, AT048826 and NK-62321}
\keywords{constraint satisfaction problems, caterpillar duality,
regular languages} \subjclass[2000]{68Q19 (05C05,08B70)}
\newtheorem{lemma}{Lemma}[section]
\newtheorem{theorem}[lemma]{Theorem}
\newfont{\Bbbb}{msbm10 at 12pt}
\newcommand{\BA}{\mathbf A}
\newcommand{\BB}{\mathbf B}
\newcommand{\BC}{\mathbf C}
\newcommand{\BD}{\mathbf D}
\newcommand{\BP}{\mathbf P}
\newcommand{\BT}{\mathbf T}
\newcommand{\ig}{\mbox{\rm Inc} }
\newcommand{\Block}{\mbox{\rm Block} }
\begin{document}


\begin{abstract}
\noindent We characterize obstruction sets in caterpillar dualities in terms of regular languages, and give a construction of the dual of a regular family of caterpillars. We show that these duals correspond to the constraint satisfaction problems definable by a monadic linear Datalog program with at most one EDB per rule.
\end{abstract}

\date{\today}
\maketitle

\section{Introduction}
A {\em homomorphism duality} is a couple $(\mathcal{O}, \BD)$ where $\BD$ is a relational structure and  $\mathcal{O}$ is a family relational structures of the same type, such that the following holds.
\begin{quote}
For any given relational structure $\BA$, there exists a homomorphism from $\BA$ to $\BD$ if and only if there is no homomorphism from any member $\BT$ of $\mathcal{O}$ to $\BA$.
\end{quote}
Significant dualities typically correspond to efficient algorithms for constraint satisfaction problems. These include finite dualities (where the family $\mathcal{O}$ is finite), tree dualities (where $\mathcal{O}$ is a family of trees) and bounded treewidth dualities (where $\mathcal{O}$ is a family of structures with bounded treewidth). More examples are discussed in~\cite{bkl}.

``Characterizing dualities'' may refer to two distinct types of problems.
\begin{itemize}
\item Characterizing targets: Deciding, given a structure $\BD$, whether there exists a family $\mathcal{O}_{\BD}$ of structures in a given class (e.g. trees) such that $(\mathcal{O}_{\BD}, \BD)$ is a duality.
\item Characterizing obstruction sets: Deciding, given a family $\mathcal{O}$, whether there exists a structure $\BD_{\mathcal{O}}$ such that $(\mathcal{O}, \BD_ {\mathcal{O}})$ is a duality.
\end{itemize}
The two problems are different. In the case of finite dualities, the characterization of obstruction sets was obtained in 2000 (\cite{nt}), and that of targets in 2007 (\cite{llt}).
The problem of characterizing targets was solved in 1998 (\cite{fv}) for tree dualities, and recently in 2009 (\cite{bk}) for bounded treewidth dualities. Characterizing obstruction sets remains an open problem both for tree duality and bounded treewidth duality.

The difficulty in characterizing obstruction sets may depend on how the obstructions are represented. In the case of finite dualities, an explicit description of the obstructions is always possible. For infinite families of obstructions, fragments of the Datalog language have proved to be an efficient tool to describe families of obstructions implicitly, through their  homomorphic images. The structures with tree duality and bounded treewidth duality all have obstruction sets that can be described in Datalog.

In \cite{cdk}, Carvalho, Dalmau and Krokhin introduced caterpillar dualities as the dualities $(\mathcal{O}, \BD)$ where $\mathcal{O}$ is describable in the smallest natural recursive fragment of Datalog, namely ``monadic linear Datalog with at most one EDB per rule''
(see Section~\ref{datalog}). They proved that the corresponding targets $\BD$ are precisely those which are homomorphically equivalent to a structure with lattice polymorphisms, and that they are recognizable by the existence of a homomorphism of a given superstructure
$\BC(\BD)$ to $\BD$ (see Section~\ref{constructions}).

The purpose of the present paper is to complement the work of Carvalho, Dalmau and Krokhin by solving the characterization of obstructions problem for caterpillar dualities. We will consider a representation of caterpillars by words over a suitable alphabet, and show that caterpillar dualities correspond to regular languages. In particular, this shows that every program in ``monadic linear Datalog with at most one EDB per rule'' describes the obstruction set of a caterpillar duality. This extends some methods developed in \cite{ett} to study antichain dualities for digraphs.  The case of general tree dualities is considered
in \cite{eptt}

We will provide the necessary background in the next section, and prove our main result in Section~\ref{caterpillars}. The link with Datalog is given in Section~\ref{datalog}, and relevant constructions and extensions are discussed in Section~\ref{constructions}

\section{Preliminaries}

\smallskip \noindent{\em Relational structures.}
A {\em type} is a finite set $\sigma = \{R_1,\dots,R_m\}$ of {\em relation symbols}, each with  an {\em arity} $r_i$ assigned to it. A $\sigma$-structure is a relational structure $\BA = \langle A;R_1(\BA),\dots,R_m(\BA)\rangle$ where $A$ is a non-empty set called the {\em universe} of $\BA$, and $R_i(\BA)$ is an $r_i$-ary relation on $A$ for each $i$. The elements of $R_i(\BA)$, $1\leq i \leq m$ will be called {\em hyperedges} of $\BA$. By analogy with the graph theoretic setting, the universe of $\BA$ will also be called its vertex-set, denoted $V(\BA)$.

A $\sigma$-structure $\BA$ may be described by its bipartite {\em incidence multigraph} $\ig(\BA)$ defined as follows. The two parts of $\ig(\BA)$ are $V(\BA)$ and $\Block(\BA)$, where
$$
\Block(\BA) = \{ (R,(x_1, \ldots, x_{r})) : R \in \sigma \mbox{ has arity $r$ and }
(x_1, \ldots, x_{r}) \in R(\BA) \},
$$
and with edges $e_{a,i,B}$ joining $a \in V(\BA)$ to $B=(R,(x_1, \ldots, x_{r})) \in \Block(\BA)$ when $x_i = a$. Thus, the degree of $B=(R,(x_1, \ldots, x_{r}))$ in $\ig(\BA)$
is precisely $r$. Here ``degree'' means number of incident edges rather than number of neighbors because parallel edges are possible: If $x_i = x_j = a \in V(\BA)$, then $e_{a,i,B}$ and $e_{a,j,B}$ both join $a$ and $B$. An element $a \in V(\BA)$  is called a {\em leaf} if it
has degree one in $\ig(\BA)$, and a {\em non-leaf} otherwise.  Similarly, a block of $\BA$ is called {\em pendant} if it is incident to at most one non-leaf, and {\em non-pendant} otherwise. A $\sigma$-structure $\BT$ is called a {\em $\sigma$-tree} (or {\em tree} for short) if $\ig(\BT)$ is a (graph-theoretic) tree, that is, it is connected and has no cycles or parallel edges. A $\sigma$-tree is called a {\em path} if it has at most two pendant blocks. A $\sigma$-tree is called a {\em caterpillar} if it is either a path or it can be turned into a path by removing all its pendant blocks (and the leaves attached to them).

\smallskip \noindent{\em Homomorphisms.}
For $\sigma$-structures $\BA$ and $\BB$, a {\em homomorphism} from $\BA$ to $\BB$ is a map $f: V(\BA) \mapsto V(\BB)$ such that $f(R_i(\BA)) \subseteq R_i(\BB)$ for all $i=1,\dots,m$, where for any relation $R \in \sigma$ of arity $r$ we have
$$
f(R) = \{(f(x_1),\dots,f(x_r)):(x_1,\dots,x_r) \in R\}.
$$
We write $\BA \rightarrow \BB$ if there exists a homomorphism from $\BA$ to $\BB$, and $\BA \not \rightarrow \BB$ otherwise. We write $\BA \leftrightarrow \BB$ when $\BA \rightarrow \BB$ and $\BB \rightarrow \BA$; $\BA$ and $\BB$ are then called {\em homomorphically equivalent}. For a finite structure $\BA$, we can always find a structure $\BB$ such that $\BA \leftrightarrow \BB$ and the cardinality of $V(\BB)$ is minimal with respect to this property. It is well known (see \cite{nt}) that any two such structures are isomorphic. We then call $\BB$ the {\em core} of $\BA$.

\smallskip \noindent{\em Automata.}

When the type $\sigma$ consists only of binary relations, a $\sigma$-structure $\BA$ is an edge-labeled directed graph. If we specify sets $I, T \subseteq V(\BA)$ of initial and terminal states respectively, we get a nondeterministic automaton $(\BA,I,T)$. The type $\sigma$
is then viewed as an alphabet. A word $w \in \sigma^*$ naturally corresponds to a directed $\sigma$-path $P_w$ with $|w|$ edges with labels successively specified by the letters of $w$. A walk is a homomorphism $\phi: P_w \rightarrow \BA$. If $\phi$ maps the first and last vertices of $P_w$ to vertices in $I$ and $T$ respectively, then the word $w$ is {\em accepted} by $(\BA, I, T)$. The set of such words is called the {\em language accepted by $(\BA, I, T)$}.

We recall a few basic facts from automata theory. The reader is referred to standard references (e.g. \cite{sipser}) for a thorough treatment. A language $\mathcal{L} \subseteq \sigma^*$ is called {\em regular} if it is the language accepted  by some nondeterministic automaton. It is well known that a language is regular if and only if it can be described by a ``regular expression'', that is, an expression constructed from letters in $\sigma$ using unions, concatenation and the star operation. Regular languages are also preserved by other basic operations such as intersection and complementation.

An automaton $(\BA, I, T)$ is called {\em deterministic} if $I$ is a singleton and for every $a \in V(\BA)$ and $R \in \sigma$, there is a unique $b \in V(\BA)$ such that $(a,b) \in R(\BA)$. It  is well known that for every non-deterministic automaton $(\BA,I,T)$, there exists a deterministic automaton $\Delta(\BA,I,T)$ which accepts the same language.

\section{Caterpillars} \label{caterpillars}

Graph-theoretic caterpillars consist of a path ``body'' to which are connected a number of pendant ``leg'' edges. Similarly, the non-leaves of a general caterpillar (with at least two blocks) can be linearly ordered $x_1, \ldots, x_n$ such that $x_{i},x_{i+1}$ are incident to one common non-pendant block $B_i$ for $i = 1, \ldots, n-1$. The remaining blocks of $\BT$ are pendant, and each of them is incident to one of $x_1, \ldots, x_n$. In this section we present a way to represent caterpillars by words over a suitable alphabet.

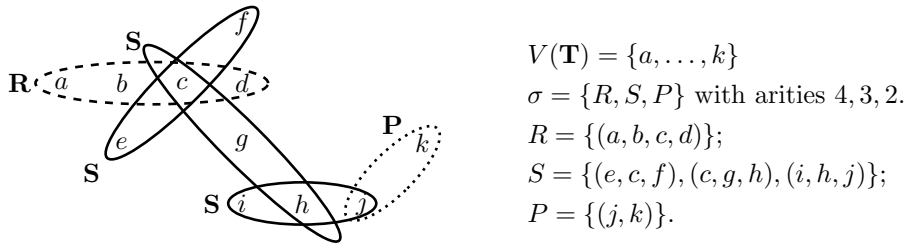
\begin{figure}[h]\label{f:structure}
\hspace{-3cm}
\begin{minipage}{5.5cm}
\begin{tikzpicture}[scale=.8]
\node at (1.3,2) {$\mathbf{R}$};
\node at (2,2) {$a$};
\node at (3,2) {$b$};
\node at (4,2) {$c$};
\node at (5,2) {$d$};
\draw [line width=1pt,dashed] (3.5,2) ellipse (55pt and 10pt);
\node at (3,1) {$e$};
\node at (5,3) {$f$};
\draw [rotate around={45:(4,2)},line width=1pt] (4,2) ellipse (50pt and 10pt);
\node at (2.5,.5) {$\mathbf{S}$};
\node at (5,1) {$g$};
\node at (6,0) {$h$};
\draw [rotate around={135:(5,1)},line width=1pt] (5,1) ellipse (65pt and 10pt);
\node at (3.2,2.7) {$\mathbf{S}$};
\node at (5,0) {$i$};
\node at (7,0) {$j$};
\draw [line width=1pt] (6,0) ellipse (35pt and 10pt);
\node at (4.5,0) {$\mathbf{S}$};
\node at (8,1) {$k$};
\draw [rotate around={-45:(7.5,.5)},line width=1pt,dotted] (7.5,.5) ellipse (10pt and 30pt);
\node at (7.5,1.3) {$\mathbf{P}$};
\end{tikzpicture}
\end{minipage}
\qquad
\begin{minipage}{3cm}
\begin{eqnarray*}
&&V(\mathbf{T})=\{a,\ldots,k\}\\
&&\sigma=\{R,S,P\} \mbox{ with arities }  4,3,2. \\
&&R=\{(a,b,c,d)\}; \\
&&S=\{(e,c,f), (c,g,h), (i,h,j)\};\\
&&P=\{(j,k)\}.
\end{eqnarray*}
\end{minipage}
\caption{The caterpillar  $\mathbf{T}$}
\end{figure}

\begin{figure}\label{f:Inc}
\begin{tikzpicture}[scale=.85]
\node  [shape=rectangle,draw] (p1) at ( 2.7,2) {$R(a,b,c,d)$};
\node  [shape=rectangle,draw] (p2) at ( 5,2) {$S(e,c,f)$};
\node  [shape=rectangle,draw] (p3) at ( 7,2) {$S(c,g,h)$};
\node  [shape=rectangle,draw] (p4) at ( 9,2) {$S(i,h,j)$};
\node  [shape=rectangle,draw] (p5) at ( 11,2) {$P(j,k)$};
\node  [shape=circle,draw] (p6) at ( 2,0) {$a$};
\node  [shape=circle,draw] (p7) at ( 3,0) {$b$};
\node  [shape=circle,draw] (p8) at ( 4,4) {$c$};
\node  [shape=circle,draw] (p9) at ( 5,0) {$d$};
\node  [shape=circle,draw] (p10) at ( 6,0) {$e$};
\node  [shape=circle,draw] (p11) at ( 7,0) {$f$};
\node  [shape=circle,draw] (p12) at ( 8,0) {$g$};
\node  [shape=circle,draw] (p13) at ( 9,4) {$h$};
\node  [shape=circle,draw] (p14) at ( 10,0) {$i$};
\node  [shape=circle,draw] (p15) at ( 11,4) {$j$};
\node  [shape=circle,draw] (p16) at ( 12,0) {$k$};
\draw [line width=1pt]  (p1) -- (p6);
\draw [line width=1pt]  (p1) -- (p7);
\draw [line width=1pt]  (p1) -- (p8);
\node  [fill=gray!40] at (2,3) {$e_{c,3,R(a,b,c,d)}$};
\draw [line width=1pt]  (p1) -- (p9);
\draw [line width=1pt]  (p2) -- (p8);
\draw [line width=1pt]  (p2) -- (p10);
\draw [line width=1pt]  (p2) -- (p11);
\draw [line width=1pt]  (p3) -- (p8);
\draw [line width=1pt]  (p3) -- (p12);
\draw [line width=1pt]  (p3) -- (p13);
\draw [line width=1pt]  (p4) -- (p14);
\draw [line width=1pt]  (p4) -- (p13);
\draw [line width=1pt]  (p4) -- (p15);
\draw [line width=1pt]  (p5) -- (p15);
\node  [fill=gray!40] at (12,3) {$e_{j,1,P(j,k)}$};
\draw [line width=1pt]  (p5) -- (p16);
\node [fill=gray!40] (p17) at (9,6) {non-leaf elements};
\draw [dashed] (p17) -- (p8);
\draw [dashed] (p17) -- (p13);
\draw [dashed] (p17) -- (p15);
\node [fill=gray!40] (p18) at (5,6) {non-pendant blocks};
\draw [dotted] (p18) -- (p3);
\draw [dotted] (p18) -- (p4);
\end{tikzpicture}
\caption{The bipartite graph $\mathrm{Inc}(\mathbf{T})$}
\end{figure}
Given a type $\sigma$, we define $\sigma_2$ as follows: For every $R \in \sigma$ of arity $k$ and for every $(i,j) \in \{ 1, \ldots, k\}^2$, $\sigma_2$ contains the symbol $R^{(i,j)}$.
Thus $\sigma_2$ can be viewed as an alphabet or as a type consisting of binary relations.

As an alphabet, $\sigma_2$ allows to represent $\sigma$-caterpillars in a natural way: If
$\BT$ is a $\sigma$-caterpillar and $x_1, \ldots, x_n$ are its non-leaves with their natural ordering, then $\BT$ corresponds to the $\sigma_2$-word
$$
X_1 L_1 X_2 L_2 X_3 \cdots X_{n-1} L_{n-1} X_n,
$$
where $X_i$ is the concatenation of all $R^{(j,j)}$s such that $\BT$ has a pendant block
$(R,(a_1, \ldots, a_k))$ with $a_j = x_i$, and $L_i$ is $R^{(j,k)}$ such that $\BT$ has a non-pendant block $(R,(a_1, \ldots, a_{\ell}))$ with $a_j = x_i$, $a_k = x_{i+1}$. A caterpillar consisting of a single block $(R,(a_1, \ldots, a_k))$ can be represented by any letter of the letters $R^{(i,j)}$, and the caterpillar consisting of one vertex and no blocks is represented by the empty word. In general, different words may represent the same caterpillar. However a caterpillar may be retrieved from any word representing it. This retrieval is essentially a functor from the category of $\sigma_2$-structures (where $\sigma_2$ is interpreted as a type) to that of $\sigma$ structures, as detailed below.

\begin{figure}[h]\label{f:new}
\centering
\subfloat [] {
\fbox{
\begin{minipage}{5.5cm}
$\sigma^2=\left \{ R^{11},\ldots,R^{14}, R^{21},\ldots, R^{44}, \right .$\\
$\phantom{\sigma^2= \{ } S^{11},S^{12},S^{13},S^{21},\ldots,S^{33},    $\\
$\phantom{\sigma^2=  \{ } \left . P^{11},P^{12},P^{21},P^{22} \right \}$
    \end{minipage}
    }}
    \qquad
\subfloat [] {
\fbox{
\begin{minipage}{3cm}
$R^{33}S^{22}S^{13}S^{23}P^{11}$   \\
\ \\
$S^{12}R^{33}S^{13}S^{21}P^{12}$
\end{minipage}
}}
\caption{(a) The alphabet $\sigma^2(\mathbf{T})$
     (b) two of the 28 words  describing  $\mathrm{Inc}(\mathbf{T})$
}

\end{figure}

There is a natural functor $\beta$ which takes a $\sigma$-structure $\BA$ and produces a corresponding $\sigma_2$-structure $\beta(\BA)$: We put $V(\beta(\BA)) = V(\BA)$,
and for $R \in \sigma$ and $(x_1, \ldots, x_k) \in R(\BA)$, we put $(x_i,x_j) \in R^{(i,j)} (\beta(\BA))$ for all $(i,j) \in \{ 1, \ldots, k\}^2$. The functor $\beta$ is a right adjoint in the sense of~\cite{ft,pultr}, thus there exists a corresponding left adjoint $\beta^*$ such that for a $\sigma_2$-structure $\BA$ and a $\sigma$-structure $\BB$, we have
$$
{\BA \rightarrow \beta(\BB)}  \Leftrightarrow {\beta^*(\BA) \rightarrow \BB}.
$$
The $\sigma$-structure $\beta^*(\BA)$ is constructed as follows. We first construct an auxiliary structure $\beta^*(\BA)^+$. For each element $x \in V(\BA)$,  $V(\beta^*(\BA)^+)$ contains a corresponding (isolated) element $x'$, and for each $(x,y) \in R^{(i,j)}(\BA)$,
$V(\beta^*(\BA)^+)$ contains additional elements $x_1, \ldots, x_k$ (where $R \in \sigma$ has arity $k$) and the hyperedge $(x_1, \ldots, x_k) \in R(\beta^*(\BA)^+)$. $\beta^*(\BA)$ is then the quotient $(\beta^*(\BA)^+)/\sim$ obtained through natural identifications. That is,
for $(x,y) \in R^{(i,j)}(\BA)$ and the corresponding $(x_1, \ldots, x_k) \in R(\beta^*(\BA)^+)$, $\sim$ identifies $x_i$ with $x'$ and $x_j$ with $y'$.

Note that the construction of $\beta^*(\BA) = (\beta^*(\BA)^+)/\sim$ may identify elements $x', y'$ that correspond to distinct elements of $x, y \in V(\BA)$. This happens when for $x, y \in V(\BA)$, $x \neq y$, there is some $R^{(i,i)} \in \sigma_2$ such that $(x,y) \in R^{(i,i)}$. In particular, a $\sigma$-caterpillar $\BT$ is described by a $\sigma_2$-word $w$, with letters of type $R^{(i,i)}$ describing its legs. In turn, $w$ naturally corresponds to the  $\sigma_2$-path $\BP_w$ with $|w|+1$ elements successively joined by the relations indicated by the letters of $w$. We then have $\beta^*(\BP) \simeq \BT$. The adjunction property between $\beta$ and $\beta^*$ implies the following.

\begin{lemma} \label{imisreg}
Let $\sigma$ be a type and $\BA$ a $\sigma$-structure. Then the family of $\sigma_2$-words describing the caterpillars that admit homomorphisms to $\BA$ is a regular language.
\end{lemma}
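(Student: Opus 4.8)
The plan is to reduce the statement to the basic closure property that regular languages are preserved under taking a deterministic automaton, using the adjunction $\beta^* \dashv \beta$ as the bridge between homomorphisms of $\sigma$-structures and acceptance of $\sigma_2$-words. First I would observe that a $\sigma_2$-word $w$ describes a caterpillar admitting a homomorphism to $\BA$ precisely when $\beta^*(\BP_w) \to \BA$, where $\BP_w$ is the $\sigma_2$-path associated with $w$; this is immediate from the remark preceding the lemma that $\beta^*(\BP_w) \simeq \BT$ for the caterpillar $\BT$ represented by $w$. By the adjunction property, $\beta^*(\BP_w) \to \BA$ holds if and only if $\BP_w \to \beta(\BA)$. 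So the language in question is exactly the set of words $w \in \sigma_2^*$ such that $\BP_w \to \beta(\BA)$.

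Next I would reinterpret the condition $\BP_w \to \beta(\BA)$ in automata-theoretic terms. Viewing $\sigma_2$ as an alphabet and $\beta(\BA)$ as a $\sigma_2$-edge-labelled directed graph, a homomorphism $\BP_w \to \beta(\BA)$ is by definition a walk in $\beta(\BA)$ spelling out the word $w$. Taking $I = T = V(\beta(\BA))$ (all vertices both initial and terminal), the nondeterministic automaton $(\beta(\BA), I, T)$ accepts precisely those $w$ for which such a walk exists. Hence the desired family of words is the language accepted by this automaton, and is therefore regular by definition of regularity quoted in the Preliminaries.

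One small point I would address carefully: a caterpillar may be represented by several different words, and conversely we should make sure that \emph{every} word over $\sigma_2$ does correspond (via $\beta^*$ applied to its path) to an actual caterpillar admitting a homomorphism to $\BA$ exactly when the automaton accepts it. This is handled by the discussion in the text — $\beta^*(\BP_w)$ is always a caterpillar (its incidence multigraph is obtained by gluing blocks along a path according to the letters of $w$), and the identifications made by $\sim$, including the possible collapse of $x'$ with $y'$ for a letter $R^{(i,i)}$, correspond exactly to legs of the caterpillar. So no words need to be excluded, and the language is genuinely the accepted language with no further intersection required.

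I do not expect a serious obstacle here: the content is entirely in the adjunction $\BA \to \beta(\BB) \Leftrightarrow \beta^*(\BA) \to \BB$, which is granted, plus the definition of a regular language as one accepted by a nondeterministic automaton. The only place demanding a little care is bookkeeping — confirming that the correspondence between $\sigma_2$-words and paths $\BP_w$ matches the edge-labelled-digraph structure of $\beta(\BA)$ on the nose, so that "walk spelling $w$" and "homomorphism $\BP_w \to \beta(\BA)$" are literally the same thing — but this is definitional rather than substantive.
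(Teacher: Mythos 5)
Your proposal is correct and follows essentially the same route as the paper: translate membership in the language to $\beta^*(\BP_w)\rightarrow\BA$, apply the adjunction to get $\BP_w\rightarrow\beta(\BA)$, and recognize this as acceptance by the nondeterministic automaton $(\beta(\BA),V(\BA),V(\BA))$ with all states initial and terminal. The extra bookkeeping remark about every word corresponding to a caterpillar is a reasonable addition but does not change the argument.
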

\begin{proof}
Let $\BT$ be a $\sigma$-caterpillar, $w$ a word describing it and $\BP_w$ the $\sigma_2$-path corresponding to $w$. Then the adjunction property yields
$$
{\BP_w \rightarrow \beta(\BA)}  \Leftrightarrow {\beta^*(\BP_w) \rightarrow \BA}.
$$
with $\beta^*(\BP_w) \simeq \BT$. Since $\beta(\BA)$ can be viewed as a  nondeterministic automaton with all states being initial and terminal, this shows that the corresponding words $w$ indeed constitute a regular language.
\end{proof}
Since the complement of a regular language is again regular, the family of caterpillar obstructions of any $\sigma$-structure $\BA$ is again described by a regular language.

\begin{theorem} \label{reghasdual}
Let $\sigma$ be a type, $\mathcal{L}$ a regular language over $\sigma_2$ and $\mathcal{O}$ the family of $\sigma$-caterpillars represented by $\mathcal{L}$.
Then there exists a $\sigma$-structure $\BA$ such that $(\mathcal{O},\BA)$ is a homomorphism duality.
\end{theorem}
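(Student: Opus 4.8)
The plan is to realize the required dual $\BA$ as a ``power structure'' built from a finite automaton recognizing $\mathcal L$. First I would reformulate the duality condition using Lemma~\ref{imisreg} and the adjunction $\beta \dashv \beta^*$: if $w$ is a $\sigma_2$-word describing a caterpillar $\BT$, then $\BP_w \to \beta(\BB)$ iff $\beta^*(\BP_w) \to \BB$, and $\beta^*(\BP_w) \simeq \BT$; moreover $\BP_w \to \beta(\BB)$ means exactly that $w$ labels a walk in $\beta(\BB)$. Hence ``no member of $\mathcal O$ admits a homomorphism to $\BB$'' is equivalent to ``$\beta(\BB)$, viewed as an automaton with all states initial and terminal, accepts no word of $\mathcal L$'', i.e.\ to the statement that $\beta(\BB)$ has no walk whose label lies in $\mathcal L$. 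So it suffices to produce a finite $\sigma$-structure $\BA$ with the property that $\BB \to \BA$ if and only if $\beta(\BB)$ has no $\mathcal L$-labelled walk. (I will assume $\varepsilon \notin \mathcal L$, i.e.\ that the one-vertex caterpillar is not in $\mathcal O$; otherwise every non-empty structure is hit by a member of $\mathcal O$ and no dual exists among non-empty structures.)

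Next I would fix a deterministic automaton $(\mathbf{M}, \{s_0\}, F)$ over the alphabet $\sigma_2$ whose accepted language is $\mathcal L$ — this exists since $\mathcal L$ is regular — and write $\delta$ for its transition function, extended to words as usual, so that $w \in \mathcal L$ iff $\delta(s_0, w) \in F$. Define the $\sigma$-structure $\BA$ by
\[
V(\BA) = \{\, S \subseteq V(\mathbf{M}) : s_0 \in S \text{ and } S \cap F = \emptyset \,\}
\]
(non-empty, since $\{s_0\}$ qualifies), and for each $R \in \sigma$ of arity $k$ let $R(\BA)$ consist of all tuples $(S_1, \dots, S_k) \in V(\BA)^k$ such that $\delta(q, R^{(i,j)}) \in S_j$ for every $(i,j) \in \{1, \dots, k\}^2$ and every $q \in S_i$. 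Thus $\BA$ is finite, with at most $2^{|V(\mathbf{M})|}$ vertices.

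The heart of the proof is the equivalence ``$\BB \to \BA$ iff $\beta(\BB)$ has no $\mathcal L$-labelled walk.'' For the forward direction, given a homomorphism $h : \BB \to \BA$, I would show by induction on the length of a walk that $\delta(s_0, w) \in h(v)$ whenever $w$ labels a walk of $\beta(\BB)$ ending at $v$: the base case is $s_0 \in h(v)$, which holds because $h(v) \in V(\BA)$; the inductive step uses that a last edge $(u, v) \in R^{(i,j)}(\beta(\BB))$ comes from a hyperedge $(x_1, \dots, x_k) \in R(\BB)$ with $x_i = u$, $x_j = v$, whose $h$-image lies in $R(\BA)$ — which is precisely the condition needed to propagate $\delta$ along the edge. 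Since $h(v) \cap F = \emptyset$, this forces $\delta(s_0, w) \notin F$, so $w \notin \mathcal L$. Conversely, if $\beta(\BB)$ has no $\mathcal L$-labelled walk, define $T : V(\BB) \to 2^{V(\mathbf{M})}$ by letting $T(v)$ be the set of all $\delta(s_0, w)$ for which $w$ labels a walk of $\beta(\BB)$ ending at $v$ (the empty walk included, so $s_0 \in T(v)$). The no-walk hypothesis gives $T(v) \cap F = \emptyset$, hence $T(v) \in V(\BA)$; and for a hyperedge $(x_1, \dots, x_k) \in R(\BB)$ we have $(x_i, x_j) \in R^{(i,j)}(\beta(\BB))$, so any walk ending at $x_i$ extends along that edge to a walk ending at $x_j$, whence $(T(x_1), \dots, T(x_k)) \in R(\BA)$. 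Thus $T$ is a homomorphism $\BB \to \BA$. Combining this equivalence with the reformulation of the first paragraph shows that $(\mathcal O, \BA)$ is a homomorphism duality.

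The adjunction bookkeeping is already supplied by Lemma~\ref{imisreg}, and the two inductions are routine; the step that needs care is pinning down the definition of $R(\BA)$, that is, recognising that ``$\beta(\BB)$ has no $\mathcal L$-labelled walk'' is equivalent to the solvability of the constraint system ``each vertex of $\BB$ carries an $F$-avoiding, $s_0$-containing set of $\mathbf{M}$-states that is closed under the transitions forced by the hyperedges of $\BB$'', and that this system is exactly the homomorphism problem $\BB \to \BA$ for the $\BA$ above. I expect that translation — between the automaton/reachability picture and a homomorphism into a finite power structure — to be the main conceptual obstacle; everything downstream of it is mechanical.
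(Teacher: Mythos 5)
Your proposal is correct and follows essentially the same route as the paper: the dual $\BA$ is the identical power-set structure built on a deterministic automaton for $\mathcal{L}$ (subsets containing the start state and avoiding the accepting states, with $R(\BA)$ defined by transition-closure), and your map $T$ is exactly the paper's homomorphism $\psi$. The only differences are cosmetic --- you package both halves as a single biconditional ``$\BB \to \BA$ iff $\beta(\BB)$ has no $\mathcal{L}$-labelled walk'' and explicitly flag the degenerate case $\varepsilon \in \mathcal{L}$, which the paper silently glosses over.
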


\begin{proof}
Let $(\BD,I,T)$ be a deterministic automaton which recognizes  $\mathcal{L}$. We define the structure $\BA = \Gamma(\BD,I,T)$ as follows. $V(\BA)$ is the set of subsets of $V(\BD)$ containing the initial state but none of the terminal states. For a relation $R \in \sigma$ of arity $k$, $R(\BA)$ is defined as follows: We put $(X_1, \ldots, X_k) \in R(\BA)$ if for all $(i,j) \in  \{ 1, \ldots, k\}^2$ and for all $a \in X_i$, the unique $b$ such that $(a,b) \in R^{(i,j)}(\BD)$ is in $X_j$.

Let $\BB$ be a structure such that no caterpillar described by $\mathcal{L}$ admits a  homomorphism to $\BB$. Let $w$ be a word over $\sigma_2$ such that there exists a homomorphism $\phi: \beta^*(\BP_w) \rightarrow \BB$. Then, $\phi$ induces a homomorphism $\phi_2: \BP_w \rightarrow \beta(\BB)$, and we denote $b_{w,\phi}$ the image of the last vertex of $\BP_w$ under $\phi_2$. Also, there is a unique homomorphism of $\BP_w$ to $\BD$ mapping the first element to the start state, and we denote $d_w$ the image of the last vertex of $\BP_w$. Using every possible $w$ and $\phi: \BT \rightarrow \BB$ we define a map $\psi: V(\BB) \rightarrow \mathcal{P}(D)$ as follows. For an element $b$ of $\BB$, $\psi(b)$ is the set of all elements $d_w$ such that $b = b_{w,\phi}$. Then $\psi(b)$ always contains the start state (because the empty word represents the  one-element caterpillar with no hyperedges, which can be mapped to $b$) and never a terminal state (because $d_w$ can never be a terminal state). Thus $\psi$ is a map from $V(\BB)$ to $V(\BA)$. We prove that it is a homomorphism of $\BB$ to $\BA$. Let $R$ be a relation in $\sigma$ of arity $k$, and $(b_1, \ldots, b_k) \in R(\BB)$. For $(i,j) \in \{1, \ldots, k\}^2$ and $d \in \psi(b_i)$, there exists a word $w$ such that $d_w = d$ and there exists a homomorphism $\phi: \beta^*(\BP_w) \rightarrow \BB$ such that $b_{w,\phi} = b_i$. By appending $R^{(i,j)}$ to $w$, we get a new word $w'$ such that $\phi: \beta^*(\BP_w) \rightarrow \BB$ naturally extends to $\phi': \beta^*(\BP_{w'}) \rightarrow \BB$, with $b_{w,\phi'} = b_j$. Therefore the unique element $d_{w'}$ such that $(d_{w},d_{w'}) \in R^{(i,j)}$ is in $\psi(b_j)$. This shows that $\psi$ is a homomorphism.

Therefore, if no caterpillar described by $\mathcal{L}$ admits a homomorphism to $\BB$, then $\BB$ admits a homomorphism to $\BA$. It remains to prove that no caterpillar described by $\mathcal{L}$ admits a homomorphism to $\BA$. For $w \in \mathcal{L}$, suppose that there exists a homomorphism $\phi:  \beta^*(\BP_{w'}) \rightarrow \BA$. This corresponds to a homomorphism $\phi_2: \BP_w \rightarrow \beta(\BA)$. Since the start state is in the image of the first element of $\BP_w$, a terminal is in the image of its last element, which is impossible.
\end{proof}
According to Theorem \ref{reghasdual}, for every regular $\sigma_2$-language $\mathcal{L}$, there exists a duality $(\mathcal{O},\BA)$ such that $\mathcal{O}$ is the family of caterpillars described by $\mathcal{L}$. However $\mathcal{L}$ may be smaller than the set $\mathcal{L}^+$ of all words describing caterpillar obstructions to $\BA_{\mathcal{L}}$; however by Lemma~\ref{imisreg}, $\mathcal{L}^+$ is also regular (since its complement
is regular). Between $\mathcal{L}$ and $\mathcal{L}^+$ there are usually non-regular languages which also describe complete set of obstructions to $\BA$. There may even be such non-regular languages that do not contain $\mathcal{L}$. Therefore, the complete characterization of obstruction sets for caterpillar dualities may be stated as follows:
\begin{theorem}
Let $\mathcal{L}$ be a $\sigma_2$-language, $\mathcal{O}$ the family of $\sigma$-caterpillars described by $\mathcal{L}$, $\mathcal{O}^+$ the family of $\sigma$-caterpillars which contain homomorphic images of members of $\mathcal{O}$ and $\mathcal{L}^+$ the collection of words describing these caterpillars. Then there exists a duality $(\mathcal{O},\BA)$ if and only if $\mathcal{L}^+$ is regular.
\end{theorem}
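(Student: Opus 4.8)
The plan is to reduce both implications to Lemma~\ref{imisreg} and Theorem~\ref{reghasdual}, the bridge being the observation that $\mathcal{O}$ and $\mathcal{O}^+$ impose the same obstruction condition. First I would record two elementary facts. Since every caterpillar is a homomorphic image of itself, $\mathcal{O} \subseteq \mathcal{O}^+$; and, unpacking the definition, a caterpillar $\BC$ lies in $\mathcal{O}^+$ exactly when some member of $\mathcal{O}$ admits a homomorphism to $\BC$ (``$\BC$ contains a homomorphic image of $\BT$'' being equivalent to $\BT \to \BC$). Combining these, for every $\sigma$-structure $\BS$ one has: some member of $\mathcal{O}$ maps to $\BS$ if and only if some member of $\mathcal{O}^+$ maps to $\BS$ --- the forward direction is the inclusion $\mathcal{O}\subseteq\mathcal{O}^+$, and the backward direction composes $\BT \to \BC \to \BS$ for witnessing $\BT \in \mathcal{O}$, $\BC \in \mathcal{O}^+$. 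Consequently, for any fixed structure $\BA$, the pair $(\mathcal{O},\BA)$ is a homomorphism duality if and only if $(\mathcal{O}^+,\BA)$ is.

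For the implication ``$\mathcal{L}^+$ regular $\Rightarrow$ a duality exists'', I would note that $\mathcal{L}^+$, being by definition the set of all words describing members of $\mathcal{O}^+$, represents exactly the family $\mathcal{O}^+$ (every member of $\mathcal{O}^+$ is described by at least one word, and that word lies in $\mathcal{L}^+$). Applying Theorem~\ref{reghasdual} to the regular language $\mathcal{L}^+$ yields a $\sigma$-structure $\BA$ with $(\mathcal{O}^+,\BA)$ a duality, hence $(\mathcal{O},\BA)$ a duality by the previous paragraph.

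For the converse, suppose $(\mathcal{O},\BA)$ is a duality. For a word $w$ over $\sigma_2$ write $\BT_w = \beta^*(\BP_w)$ for the caterpillar it describes. By the duality together with the characterization of $\mathcal{O}^+$ above, $\BT_w \notin \mathcal{O}^+$ precisely when no member of $\mathcal{O}$ maps to $\BT_w$, i.e.\ precisely when $\BT_w \to \BA$. Hence the complement of $\mathcal{L}^+$ in $\sigma_2^{*}$ is exactly the set of words describing caterpillars that admit a homomorphism to $\BA$, which is regular by Lemma~\ref{imisreg}. Since regular languages are closed under complementation, $\mathcal{L}^+$ is regular.

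Every step here is short, so I do not expect a serious obstacle; the only care needed is definitional bookkeeping --- checking that ``contains a homomorphic image of $\BT$'' coincides with ``$\BT\to\,\cdot\,$'', that every word of $\sigma_2^{*}$ genuinely describes a caterpillar (so that $\mathcal{L}^+$ and its complement partition $\sigma_2^{*}$ according to membership of $\BT_w$ in $\mathcal{O}^+$), and that the family of caterpillars represented by $\mathcal{L}^+$ equals $\mathcal{O}^+$ on the nose rather than merely up to homomorphic equivalence.
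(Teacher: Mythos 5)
Your proof is correct and takes essentially the same route as the paper, which leaves the argument implicit in the surrounding discussion: the ``only if'' direction is Lemma~\ref{imisreg} plus closure of regular languages under complementation, and the ``if'' direction is Theorem~\ref{reghasdual} applied to $\mathcal{L}^+$ together with the observation that $\mathcal{O}$ and $\mathcal{O}^+$ obstruct exactly the same structures. Your explicit bookkeeping of that last equivalence is a welcome addition rather than a deviation.
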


\section{Caterpillar Datalog programs} \label{datalog}

A {\em caterpillar Datalog program} is a ``monadic linear Datalog program with at most one EDB per rule'', that is, a set of rules of the form
\begin{equation} \label{datrule}
a \in \rho_i \leftarrow \mbox{$b \in \rho_j$ and $(x_1, \ldots, x_k) \in R$ with $x_m = a, x_n = b$}.
\end{equation}
Here $R$ is a relation in a type $\sigma$ of arity $k$ (called an extensional database or EDB), and $\rho_i, \rho_j$ are unary auxiliary relations that are not in $\sigma$ and that will be defined recursively (they are called intensional databases or IDBs). The auxiliary relations are monadic, that is, unary, and the program is ``linear'' since at most one auxiliary relation is used in the condition on the right side of the arrow. (See \cite{fv} for a description of general Datalog programs.) In addition, the first rule is a formal initialization:
\begin{equation} \label{datinit}
a \in \rho_1 \leftarrow a = a,
\end{equation}
and there are terminal rules of the form
\begin{equation} \label{datterm}
\mbox{goal} \leftarrow \mbox{$a \in \rho_i$}.
\end{equation}

A Datalog program is usually seen as a way to construct unary relations $\rho_1, \rho_2, \ldots$ in a $\sigma$-structure $\BB$ recursively, by a repeated application of the rules that apply, until a certain ``goal'' is achieved. Note that all the rules can be rewritten in terms of the type $\sigma_2$: The rule \ref{datrule} can be written
\begin{equation} \label{datrulesigma2}
a \in \rho_i \leftarrow \mbox{$b \in \rho_j$ and $(b,a) \in R^{(n,m)}$}.
\end{equation}
In this modified form, the program can be executed in $\beta(\BB)$. We see that the ``goal'' is achieved when a certain $\sigma_2$-walk is found in $\beta(\BB)$, which corresponds to finding a homomorphic image of the corresponding caterpillar in $\BB$.

Therefore, a caterpillar Datalog program will achieve its goal on the structures which contain homomorphic images of caterpillars belonging to a certain family. To see that this family is regular, we consider the nondeterministic automaton $(\BC,I,T)$ of type $\sigma_2$ described by the rules of the programs: $V(\BC)$ is the set of IDB's of the program, and for each rule
$$
a \in \rho_i \leftarrow \mbox{$b \in \rho_j$ and $(b,a) \in R^{(n,m)}$}
$$
we put $(\rho_j,\rho_i) \in R^{(n,m)}(\BC)$. We put $I = \{\rho_1\}$, and the terminal states are the states $\rho_i$ appearing in terminal rules. Thus a goal-achieving derivation in a structure $\BB$ must correspond to a word accepted by $(\BC,I,T)$, and the family of such words is regular. Combining this with Theorem \ref{reghasdual} we get the following.
\begin{theorem}
For every caterpillar Datalog program, there exists a structure $\BA$ such that an input structure $\BB$ admits a homomorphism to $\BA$ if and only if the program does not achieve its goal on $\BB$.
\end{theorem}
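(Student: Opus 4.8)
The plan is to make precise the correspondence---sketched informally in the paragraph preceding the statement---between goal-achieving derivations of the program and accepting runs of the $\sigma_2$-automaton built from its rules, and then to apply Theorem~\ref{reghasdual}. First I would rewrite every rule of the program in the $\sigma_2$-form~\ref{datrulesigma2}, so that the unique EDB occurring in each rule is a binary relation $R^{(n,m)}\in\sigma_2$, and form the nondeterministic $\sigma_2$-automaton $(\BC,I,T)$ exactly as above: $V(\BC)$ is the set of IDBs of the program, there is a transition $(\rho_j,\rho_i)\in R^{(n,m)}(\BC)$ for each rule $a\in\rho_i\leftarrow b\in\rho_j\text{ and }(b,a)\in R^{(n,m)}$, the single initial state is $\rho_1$, and $T$ consists of the IDBs occurring in terminal rules~\ref{datterm}. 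Let $\mathcal{L}$ be the language accepted by $(\BC,I,T)$; it is regular by definition. Let $\mathcal{O}=\{\beta^*(\BP_w):w\in\mathcal{L}\}$ be the family of $\sigma$-caterpillars represented by the words of $\mathcal{L}$.

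The heart of the argument is to prove, for every $\sigma$-structure $\BB$, that the program achieves its goal on $\BB$ if and only if some caterpillar in $\mathcal{O}$ admits a homomorphism to $\BB$. For the forward direction I would run the program on $\beta(\BB)$ and trace a goal-achieving derivation backwards. Because the program is monadic and linear with at most one EDB per rule, such a derivation is not a branching tree but a single chain: it yields a sequence of IDBs $\rho_1=\rho_{i_0},\rho_{i_1},\ldots,\rho_{i_\ell}$ with $\rho_{i_\ell}\in T$ and elements $b_0,b_1,\ldots,b_\ell\in V(\BB)$ with $b_t$ placed in $\rho_{i_t}$, such that each consecutive pair $(b_{t-1},b_t)$ lies in the binary relation named by the $t$-th rule applied; here one uses that the initialization rule~\ref{datinit} puts every element of $\BB$ into $\rho_1$, so $b_0$ is unconstrained. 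Then the word $w$ obtained by concatenating the names of the rules used is accepted by $(\BC,I,T)$ via the run $\rho_{i_0},\ldots,\rho_{i_\ell}$, so $w\in\mathcal{L}$, and $t\mapsto b_t$ is a homomorphism $\BP_w\to\beta(\BB)$; by the adjunction between $\beta^*$ and $\beta$ from Section~\ref{caterpillars} this is a homomorphism from $\beta^*(\BP_w)\in\mathcal{O}$ to $\BB$. Conversely, given $w\in\mathcal{L}$ with a homomorphism $\beta^*(\BP_w)\to\BB$, the adjunction gives $\phi_2\colon\BP_w\to\beta(\BB)$; pairing the vertices of $\BP_w$ read off by $\phi_2$ with an accepting run of $w$ in $\BC$ and feeding the corresponding rules to the program in order produces a derivation placing the $\phi_2$-image of the last vertex of $\BP_w$ into a terminal IDB, so the goal is reached. (The degenerate case $w=\varepsilon$, which occurs precisely when $\rho_1\in T$, corresponds to the one-vertex caterpillar with no hyperedges, which maps to every $\BB$.)

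Finally I would apply Theorem~\ref{reghasdual} to the regular language $\mathcal{L}$: it produces a $\sigma$-structure $\BA$ for which $(\mathcal{O},\BA)$ is a homomorphism duality, that is, $\BB\rightarrow\BA$ if and only if no member of $\mathcal{O}$ maps to $\BB$. Combining this with the equivalence of the previous paragraph, $\BB\rightarrow\BA$ if and only if the program does not achieve its goal on $\BB$, which is the assertion.

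I expect the main obstacle to be the middle step: translating the fixed-point semantics of a monadic linear single-EDB Datalog program into a labelled walk in $\beta(\BB)$, and in particular verifying that the three syntactic restrictions---``monadic'', ``linear'', ``at most one EDB per rule''---together force each derivation to be a $\sigma_2$-path, so that it corresponds via $\beta^*$ to a caterpillar rather than to a more complicated tree, while correctly accounting for the roles of the initialization rule~\ref{datinit} and the terminal rules~\ref{datterm}. Once this dictionary is in place, the remaining steps are essentially formal, relying on Lemma~\ref{imisreg}, the closure properties of regular languages, and Theorem~\ref{reghasdual}.
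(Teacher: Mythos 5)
Your proposal is correct and follows essentially the same route as the paper: you build the same nondeterministic $\sigma_2$-automaton $(\BC,I,T)$ from the rewritten rules, identify goal-achieving derivations with accepting runs (hence with homomorphisms from the caterpillars $\beta^*(\BP_w)$, $w\in\mathcal{L}$, via the adjunction), and invoke Theorem~\ref{reghasdual}. The only difference is that you spell out the derivation-to-walk dictionary in more detail than the paper, which treats it as an informal observation in the preceding paragraph.
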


\section{Construction and characterization of duals} \label{constructions}
For a type $\sigma$, a regular $\sigma_2$-language $\mathcal{L}$ may be described by a regular expression, an automaton (deterministic or nondeterministic) which recognizes it or a caterpillar Datalog program. The previous section explains how to convert a caterpillar Datalog program into a nondeterministic automaton which recognizes the same language. We refer to \cite{sipser} for the conversion from regular expression to automaton, and for the construction $\Delta$ which takes a nondeterministic automaton $(\BB,I,T)$ and constructs a deterministic automaton $(\BD,I',T')  = \Delta(\BB,I,T)$ which accepts the same language.
Thus, if the regular $\sigma_2$-language $\mathcal{L}$ is recognized by the automaton $(\BB,I,T)$, then the corresponding caterpillar duality is $(\mathcal{O},\BA)$, where $\mathcal{O}$ is the family of $\sigma$-caterpillars described by $\mathcal{L}$ and
$\BA = \Gamma \circ \Delta(\BB,I,T)$, $\Gamma$ being the construction described in the proof of Theorem~\ref{reghasdual}.

Now for any $\sigma$-structure $\BA$, $(\beta(\BA),V(\BA),V(\BA))$ is a nondeterministic automaton which recognizes the $\sigma_2$-language of words describing caterpillars which admit a homomorphism to $\BA$, and $\Delta(\beta(\BA),V(\BA),V(\BA))$ is a deterministic automaton which serves the same purpose. Let  $\Delta^*(\beta(\BA), V(\BA),V(\BA))$ be the deterministic automaton obtained from $\Delta(\beta(\BA), V(\BA),V(\BA))$ by interchanging the set of terminal states with its complement. Then $\Delta^*(\beta(\BA),V(\BA),V(\BA))$ is a deterministic automaton which recognizes
the $\sigma_2$-language of words describing the set $\mathcal{O}$ of caterpillars
which do not admit a homomorphism to $\BA$, and $(\mathcal{O},\Gamma \circ \Delta^*(\beta(\BA),V(\BA),V(\BA)))$ is the corresponding caterpillar duality, which has the following properties.
\begin{theorem}
$\BC(\BA) = \Gamma \circ \Delta^*(\beta(\BA),V(\BA),V(\BA))$ has caterpillar duality, and for any $\sigma$-structure $\BB$ with caterpillar duality, there exists a homomorphism
of $\BA$ to $\BB$ if and only if there exists a homomorphism of $\BC(\BA)$ to $\BB$.
In particular, $\BA$ itself has caterpillar duality if and only if there exists a homomorphism of
$\BC(\BA)$ to $\BA$.
\end{theorem}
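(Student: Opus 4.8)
# Proof Proposal

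The plan is to unpack the construction $\BC(\BA) = \Gamma \circ \Delta^*(\beta(\BA),V(\BA),V(\BA))$ and check three things in turn: first that $\BC(\BA)$ has caterpillar duality; second that $\BA \to \BB$ implies $\BC(\BA) \to \BB$ whenever $\BB$ has caterpillar duality; and third the converse implication. The first point is immediate from the earlier work: by construction $\Delta^*(\beta(\BA),V(\BA),V(\BA))$ is a deterministic automaton recognizing the regular language $\mathcal{L}^+$ of all $\sigma_2$-words describing $\sigma$-caterpillars with no homomorphism to $\BA$, so Theorem~\ref{reghasdual} (via $\Gamma$) yields that $(\mathcal{O},\BC(\BA))$ is a homomorphism duality, where $\mathcal{O}$ is exactly that family of caterpillars; in particular $\BC(\BA)$ has caterpillar duality.

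For the forward implication, suppose $\BA \to \BB$ and $\BB$ has caterpillar duality, say $(\mathcal{O}',\BB)$ is a duality with $\mathcal{O}'$ a family of caterpillars. I want $\BC(\BA) \to \BB$. Since $\BB$ has caterpillar duality, it suffices to show that no caterpillar in $\mathcal{O}'$ maps to $\BC(\BA)$. Suppose some $\BT \in \mathcal{O}'$ satisfies $\BT \to \BC(\BA)$. Because $\BC(\BA)$ has duality with obstruction family $\mathcal{O}$ (all caterpillars not mapping to $\BA$), and $\BT \to \BC(\BA)$, the caterpillar $\BT$ is \emph{not} in $\mathcal{O}$, i.e. $\BT \to \BA$. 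Composing with $\BA \to \BB$ gives $\BT \to \BB$, contradicting $\BT \in \mathcal{O}'$ and the duality $(\mathcal{O}',\BB)$. Hence no member of $\mathcal{O}'$ maps to $\BC(\BA)$, so by the duality for $\BB$ we conclude $\BC(\BA) \to \BB$.

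For the converse, suppose $\BC(\BA) \to \BB$; I claim $\BA \to \BB$. The key observation is that $\BA \to \BC(\BA)$: every caterpillar that maps to $\BA$ trivially maps to $\BA$, so $\BA$ omits all obstructions in $\mathcal{O}$, and by the duality $(\mathcal{O},\BC(\BA))$ this forces $\BA \to \BC(\BA)$. Composing, $\BA \to \BC(\BA) \to \BB$ gives $\BA \to \BB$. Combining the two implications proves the equivalence; specializing $\BB = \BA$ (which requires noting that the statement ``$\BA$ has caterpillar duality'' is exactly what lets us run the forward direction with $\BB=\BA$, and the converse direction is unconditional) yields the final sentence: $\BA$ has caterpillar duality iff $\BC(\BA) \to \BA$.

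The main obstacle I anticipate is not in the homomorphism bookkeeping above, which is essentially a diagram chase through the two dualities, but in being careful about what ``$\BB$ has caterpillar duality'' supplies: it gives \emph{some} caterpillar obstruction set $\mathcal{O}'$, and one must use only that $(\mathcal{O}',\BB)$ is a genuine duality (not any canonical choice of $\mathcal{O}'$). The argument is robust to this because it only ever uses the defining biconditional of a duality. One should also double-check the degenerate cases — the empty word / one-vertex caterpillar, and structures with no homomorphism to anything — but these are handled by the conventions already fixed in Section~\ref{caterpillars} (the empty word represents the single-vertex caterpillar, which maps everywhere), so no separate case analysis is needed.
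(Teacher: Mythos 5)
Your argument for the main equivalence is correct and is essentially the intended one: the paper itself offers no written proof (it defers to the construction just given and to the citation of Carvalho--Dalmau--Krokhin), and your diagram chase through the two dualities $(\mathcal{O},\BC(\BA))$ and $(\mathcal{O}',\BB)$ fills that in cleanly. The one place that needs repair is the final ``in particular'' clause. Your parenthetical only delivers one direction: if $\BA$ has caterpillar duality, then taking $\BB=\BA$ in the forward implication (together with the identity $\BA\to\BA$) gives $\BC(\BA)\to\BA$. The converse --- that $\BC(\BA)\to\BA$ \emph{implies} $\BA$ has caterpillar duality --- does not follow from ``the converse direction is unconditional'': that direction only yields the vacuous conclusion $\BA\to\BA$. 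What you need, and what you already have the ingredients for, is this: you proved $\BA\to\BC(\BA)$ always holds, so the hypothesis $\BC(\BA)\to\BA$ gives $\BA\leftrightarrow\BC(\BA)$; since $(\mathcal{O},\BC(\BA))$ is a duality and the property of being the target of a duality is invariant under homomorphic equivalence (replace the target by any homomorphically equivalent structure and the defining biconditional is preserved), $(\mathcal{O},\BA)$ is also a duality, so $\BA$ has caterpillar duality. Adding that one line closes the gap; everything else in your write-up stands.
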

This is essentially the characterization obtained in~\cite{cdk}. Note that $\Delta^*$ and $\Gamma$ are both exponential constructions, so that $\BC$ is a doubly exponential construction.

With a slight modification, the same type of characterization also holds for caterpillar dualities with additional properties. The most distinctive case is that of path dualities, where the obstructions are described by words not containing any of the symbols $R^{(i,i)}$ such that $R \in \sigma$ has arity at least $2$. For a $\sigma$-structure $\BA$, let  $\mathcal{L}_{\BA}$ be the language describing the caterpillar obstructions to $\BA$,
and $\mathcal{L}_{P} \subseteq \sigma_2^*$ be the set of words not containing any of the symbols $R^{(i,i)}$ such that $R \in \sigma$ has arity at least $2$. Then $\mathcal{L}_{P}$ and $\mathcal{L}_{P} \cap \mathcal{L}_{\BA}$ are regular languages, hence with the construction $\Gamma$ we can build a structure $\BC_P(\BA)$ such that $\BA$ has path duality if and only if there exists a homomorphism of $\BC_P(\BA)$ to $\BA$. A similar statement holds for any intersection $\mathcal{L} \cap \mathcal{L}_{\BA}$, where
$\mathcal{L} \subseteq \sigma_2^*$ is a regular language.

\end{document}